\newtheorem*{theorem*}{Theorem}
\newtheorem{theorem}{Theorem}
  \newtheorem{lemma}[theorem]{Lemma}
  \newtheorem{claim}[theorem]{Claim}
   \newtheorem{corollary}[theorem]{Corollary}
  \newtheorem{proposition}[theorem]{Proposition}
  \theoremstyle{definition}
  \newtheorem{example}[theorem]{Example}
   \newtheorem{remark}[theorem]{Remark}
\newcommand{\RR}{\mathbb R}
\newcommand{\QQ}{\mathbb Q}
\newcommand{\ZZ}{\mathbb Z}
\newcommand{\cG}{\mathcal G}
\DeclareMathOperator{\trop}{trop}
\DeclareMathOperator{\val}{val}
\DeclareMathOperator{\spann}{span}
\DeclareMathOperator{\rk}{rk}
\DeclareMathOperator{\cl}{cl}
\DeclareMathOperator{\pos}{pos}
\DeclareMathOperator{\Gr}{Gr}
 \title[]{Higher Connectivity of Tropicalizations}
\author{Diane Maclagan}
\address{Mathematics Institute, University of Warwick, Coventry, CV4 7AL, United Kingdom}
\email{D.Maclagan@warwick.ac.uk}
 \author{Josephine Yu}
\address{School of Mathematics, Georgia Tech, Atlanta GA 30332, USA}
\email {jyu@math.gatech.edu}
 \thanks {\emph {2010 Mathematics Subject Classification:} 14T05}
\date{\today}
\begin{document}

  \begin{abstract}
 We show that the tropicalization of an irreducible $d$-dimensional
 variety over a field of characteristic $0$ is $(d-\ell)$-connected
 through codimension one, where $\ell$ is the dimension of the
 lineality space of the tropicalization.  From this we obtain a higher
 connectivity result for skeleta of rational polytopes.  We also prove
 a tropical analogue of the Bertini Theorem: the intersection of the
 tropicalization of an irreducible variety with a generic hyperplane
 is again the tropicalization of an irreducible variety.
 \end{abstract}

 \maketitle

\section{Introduction}

The tropicalization of a $d$-dimensional irreducible subvariety of the
algebraic torus $(K^*)^n$, where $K$ is a field, is the support of a
pure, $d$-dimensional polyhedral complex in $\RR^n$.  Being {\em pure}
means that all inclusion-maximal faces have the same dimension.  
The {\em facets} and {\em ridges} of a pure, $d$-dimensional polyhedral complex are the faces of dimensions $d$ and $d-1$ respectively.
A fundamental result in tropical geometry is that the tropicalization
of an irreducible variety is connected through codimension one: given any two
facets there is a path between them that goes through facets and
ridges only \cites{BJSST,  CartwrightPayne},
\cite{TropicalBook}*{Theorem 3.5.1}.  

In this paper we show the stronger result that the tropicalization of
an irreducible variety remains connected through codimension one even
after we remove some closed facets.  A pure polyhedral complex
is {\em $k$-connected through codimension one} if removing any
\mbox{$k-1$} closed facets leaves it connected through codimension
one.  In other words, the facet-ridge incidence hypergraph, whose
vertices are facets of the polyhedral complex and whose hyperedges are
the ridges, remains connected after any $k-1$ vertices and their
incident hyperedges are removed.  A hypergraph is connected if there
is a path from any vertex to any other vertex where each step connects
two vertices in the same hyperedge.

\begin{theorem} \label{t:maintheorem}
Let $K$ be a field of characteristic $0$ that is either algebraically
closed, complete, or real closed with convex valuation ring.  Let $X$
be a $d$-dimensional irreducible subvariety of $(K^*)^n$.  
Let $\Sigma$ be a pure $d$-dimensional rational polyhedral complex with support $|\Sigma|=\trop(X)$.  Write $\ell$ for the dimension of the lineality space of $\Sigma$. 
Then $\Sigma$ is $(d-\ell)$-connected through
codimension one.  In other words, its facet-ridge hypergraph is $(d-\ell)$-connected.
\end{theorem}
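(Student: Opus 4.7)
The plan is to induct on the quantity $k := d - \ell$, using the tropical Bertini theorem (another main result of this paper, as noted in the abstract) as the key engine for the inductive step. The base case $k = 1$ is precisely the classical statement that the tropicalization of an irreducible variety is connected through codimension one, which is recalled in the introduction; this is the same as $1$-connectedness of the facet-ridge hypergraph.

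For the inductive step, assume $k \geq 2$ and fix any $k-1$ closed facets $G_1, \ldots, G_{k-1}$ to be removed, along with two remaining facets $F, F'$ between which a path must be found. I would pick points $w \in \relint(F)$ and $w' \in \relint(F')$, and apply tropical Bertini to produce a hyperplane $H \subset (K^*)^n$ such that $Y := X \cap H$ is irreducible of dimension $d - 1$, while arranging genericity conditions so that $\trop(H)$ contains $w$ and $w'$, meets the facets of $\Sigma$ transversely, and avoids the relative interiors of the $G_i$. Then $\trop(Y) = \trop(X) \cap \trop(H)$ is a pure $(d-1)$-dimensional polyhedral complex whose lineality space still has dimension $\ell$, so its effective dimension is $k - 1$. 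By the inductive hypothesis, its facet-ridge hypergraph is $(k-1)$-connected, so removing the cells where it meets the $G_i$ still leaves a path from the facet of $\trop(Y)$ through $w$ to the one through $w'$. Since by transversality every facet (resp.\ ridge) of $\trop(Y)$ lies in the relative interior of a facet (resp.\ ridge or facet) of $\Sigma$, this path lifts to the required path in $\Sigma$ from $F$ to $F'$ avoiding $G_1, \ldots, G_{k-1}$.

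The main obstacle I expect is making the genericity of $H$ strong enough to guarantee simultaneously: (i) irreducibility of $Y$ (where tropical Bertini is used), (ii) that the lineality space of $\trop(Y)$ does not grow beyond $\ell$, so the inductive parameter drops by exactly one, and (iii) that $\trop(H)$ passes through the prescribed points $w, w'$ while missing the interiors of the $G_i$ and intersecting $\Sigma$ transversely on every facet, so that the lift of a codimension-one path is honest. Conditions (ii) and (iii) are essentially linear-algebraic constraints that should hold on a Zariski-dense set of hyperplanes; the genuine content is in invoking Bertini for irreducibility once these combinatorial constraints are imposed, which is why packaging the argument as a tropical Bertini theorem is the natural setup.
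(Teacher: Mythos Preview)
Your overall strategy---induct on dimension and use tropical Bertini to slice by a hyperplane in the inductive step---is the same as the paper's. The gap is in the sentence ``arranging genericity conditions so that $\trop(H)$ contains $w$ and $w'$ \ldots\ and avoids the relative interiors of the $G_i$.'' Meeting $\relint(F)$ and $\relint(F')$ while avoiding a prescribed closed facet is \emph{not} a genericity condition: each requirement is open, but you give no argument that their intersection is nonempty, and the tropical Bertini theorem only supplies a \emph{dense} (not open) set of good hyperplanes, so it cannot be invoked until nonemptiness of that open set is established. This is exactly the content of the paper's Claim~\ref{lem:oneClass}, whose proof is a genuine polyhedral argument using balancing and pointedness of $\Sigma$; even then it only shows that any two facets are linked by a \emph{chain} of hyperplanes, each avoiding one fixed $F\in\mathcal G$, rather than producing a single hyperplane avoiding all of $\mathcal G$.

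You also cannot drop the avoidance condition and simply delete the intersections $H\cap G_i$ from the slice: there are $k-1$ of them, while the inductive hypothesis gives only $(k-1)$-connectivity of $\trop(Y)$, which tolerates removing at most $k-2$ facets---off by one. The paper resolves this by making the hyperplane avoid exactly one chosen $F\in\mathcal G$ (nontrivially, via Claim~\ref{lem:oneClass}), leaving only $k-2$ bad facets in the slice for the inductive hypothesis to handle (Claim~\ref{lem:path}). A smaller issue: for a generic affine hyperplane the lineality space of $\Sigma\cap H$ drops by one along with the dimension, so your inductive parameter $k=d-\ell$ does not decrease; the paper sidesteps this by first quotienting out the lineality space and inducting on $d$ with $\ell=0$.
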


The hypotheses on the field, except for the characteristic assumption,
come from the case $d=1$, which is \cite{CartwrightPayne}*{Theorem
  1}.  The characteristic assumption comes from the toric Bertini theorem.
The lineality space depends on the choice of polyhedral complex
structure $\Sigma$ on the tropical variety, as does the facet-ridge
hypergraph.  However the result holds for any choice of polyhedral
structure, whether or not the lineality space is 
the largest possible one; see Example~\ref{e:lineality}.

This result is sharp.  In a $d$-dimensional polyhedral fan with
$\ell$-dimensional lineality space, a simplicial facet has $d-\ell$
ridges as its boundary, so we can remove $d-\ell$ neighboring facets
to isolate the simplicial facet.

Theorem~\ref{t:maintheorem} can be considered a generalization of
Balinski's Theorem, which says that the edge graph of a
$d$-dimensional polytope is $d$-connected \cite{Balinski}.  Any
complete fan is the tropicalization of the algebraic torus $(K^*)^n$,
which is irreducible. If $\Sigma$ is the (complete) normal fan of a
full dimensional polytope $P$ in $\RR^d$, then, since
Theorem~\ref{t:maintheorem} holds for any such fan $\Sigma$, it can be
translated as: removing $d-1$ vertices and their incident edges from
the edge graph of the polytope $P$ does not disconnect this graph.
This is precisely Balinski's Theorem.

A new polyhedral consequence of Theorem~\ref{t:maintheorem} is the following
necessary condition for a $k$-dimensional rational fan in $\mathbb R^n$ to be
the $k$-skeleton of the normal fan of a polytope.

\begin{corollary}
  \label{cor:skeleton}
The $k$-dimensional skeleton of the normal fan of a rational full-dimensional
polytope is $k$-connected through codimension one.
\end{corollary}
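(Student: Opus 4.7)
The plan is to realize the $k$-skeleton $\Sigma^{(k)}$ as the support of the tropicalization of some irreducible $k$-dimensional subvariety of $(K^*)^n$, so that Theorem~\ref{t:maintheorem} can be applied directly.

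Let $P \subset \RR^n$ be the full-dimensional rational polytope and $\Sigma$ its normal fan. After rescaling, I may assume that $P$ is a lattice polytope and that the associated toric divisor $D_P$ is very ample on $X_\Sigma$, giving a projective embedding $X_\Sigma \hookrightarrow \mathbb{P}^N$. Choose $n-k$ generic hyperplanes $H_1,\dots,H_{n-k} \subset \mathbb{P}^N$; in coordinates on the dense torus $T = (K^*)^n \subset X_\Sigma$ they cut out generic Laurent polynomials $f_1,\dots,f_{n-k}$, each with Newton polytope $P$ and coefficients of valuation zero. Iterated application of the classical Bertini theorem (choosing a single point when $k=0$) shows that
\[ Y^\circ := V(f_1) \cap \cdots \cap V(f_{n-k}) \cap T \]
is irreducible of dimension $k$.

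Because each $f_i$ has Newton polytope $P$ and unit-valuation coefficients, the tropical hypersurface $\trop(V(f_i))$ equals the codimension-one skeleton of $\Sigma$. For generic coefficients, the tropicalization of the complete intersection agrees with the stable intersection of the $\trop(V(f_i))$, which is a balanced $k$-dimensional polyhedral complex supported on $|\Sigma^{(k)}|$ whose weight at a maximal cone $\sigma$ equals the $(n-k)$-dimensional mixed volume of $n-k$ copies of the dual face $F_\sigma \subset P$; this equals $(n-k)!\,\operatorname{vol}_{n-k}(F_\sigma) > 0$, since $P$ is full-dimensional and so every $(n-k)$-face $F_\sigma$ spans an $(n-k)$-dimensional affine subspace. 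Consequently $\trop(Y^\circ) = |\Sigma^{(k)}|$ as sets.

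Since $\Sigma$ is a complete fan in $\RR^n$, its lineality space, and hence that of the subfan $\Sigma^{(k)}$, is trivial. Applying Theorem~\ref{t:maintheorem} to the irreducible $k$-dimensional variety $Y^\circ$ with $d=k$ and $\ell=0$ then yields that $\Sigma^{(k)}$ is $k$-connected through codimension one. The main technical obstacle in this plan is the tropical Bernstein-style identification of the stable intersection together with the verification that its weights are strictly positive on every maximal cone of $\Sigma^{(k)}$; once this is in place, the Bertini step and the appeal to Theorem~\ref{t:maintheorem} are routine.
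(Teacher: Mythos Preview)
Your proposal is correct and follows essentially the same route as the paper: realize $\Sigma^{(k)}$ as $\trop(Y^\circ)$ for a generic complete intersection of $n-k$ polynomials with Newton polytope $P$, then invoke Theorem~\ref{t:maintheorem} with $\ell=0$. The paper compresses both steps into citations---\cite[Corollary~4.6.11]{TropicalBook} for the identification of the tropicalization with the $k$-skeleton and \cite{Yu_generic} for irreducibility of the generic complete intersection---whereas you unpack them via the very ample embedding, classical Bertini, and the stable-intersection/mixed-volume computation; the content is the same.
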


\begin{proof}
 The $k$-skeleton of the normal fan of a rational full-dimensional
 polytope $P \subset \mathbb R^n$ is the tropicalization of an
 irreducible variety, as it is the tropicalization of any sufficiently
 general complete intersection of $n-k$ polynomials with Newton
 polytope $P$ \cite[Corollary 4.6.11]{TropicalBook},
 \cite{Yu_generic}.
\end{proof}

When the incidence hypergraph is replaced by the graph in which each
hyperedge is replaced by the clique joining the vertices of the hyperedge,
even higher connectivity holds~\cite{Ath, Sallee}.  However, as noted
above, the hypergraph version of the statement cannot be strengthened.

The connectedness result also gives a new obstruction for the realizability
of a polyhedral complex as the tropicalization of an irreducible
variety, as we will now demonstrate with a simple example.
\begin{example}
\label{ex:notconnected}
Consider the two-dimensional fan in $\RR^5$ that is the union of two
standard tropical planes living in $\spann
\{\mathbf{e}_1,\mathbf{e}_2,\mathbf{e}_3\}$ and $\spann
\{\mathbf{e}_1,\mathbf{e}_4,\mathbf{e}_5\}$ respectively, meeting
along the ray spanned by $\mathbf{e}_1$, where $\mathbf{e}_1,\dots,\mathbf{e}_5$ is the
standard basis for $\mathbb R^5$.
See Figure~\ref{fig:twoK4}.  Each node in the figure represents a ray
and each edge represents a two-dimensional cone of the fan.
The ray spanned by $\mathbf{e}_1$ is the only intersection of these two tropical planes.  Removing a 
closed maximal cone containing $\mathbf{e}_1$ disconnects the fan, so the fan
is not $2$-connected.  It is therefore not the tropicalization of an
irreducible variety over characteristic $0$.

\begin{figure}
\begin{center}
\definecolor{uuuuuu}{rgb}{0.26666666666666666,0.26666666666666666,0.26666666666666666}
\vspace{-0.5in}
\begin{tikzpicture}[line cap=round,line join=round,>=triangle 45,x=1.0cm,y=1.0cm]
\clip(-5.38,-5.37) rectangle (8.66,3.53);
\draw [line width=0.4pt] (0.,0.)-- (-2.,1.1);
\draw [line width=0.4pt] (0.,0.)-- (-2.,-1.18);
\draw [line width=0.4pt] (0.,0.)-- (-1.28,0.);
\draw [line width=0.4pt] (-1.28,0.)-- (-2.,1.1);
\draw [line width=0.4pt] (-1.28,0.)-- (-2.,-1.18);
\draw [line width=0.4pt] (-2.,1.1)-- (-2.,-1.18);
\draw [line width=0.4pt] (0.,0.)-- (1.36,0.);
\draw [line width=0.4pt] (0.,0.)-- (2.02,-1.21);
\draw [line width=0.4pt] (0.,0.)-- (2.02,1.21);
\draw [line width=0.4pt] (1.36,0.)-- (2.02,1.21);
\draw [line width=0.4pt] (1.36,0.)-- (2.02,-1.21);
\draw [line width=0.4pt] (2.02,1.21)-- (2.02,-1.21);
\draw [fill=uuuuuu] (0.,0.) circle (1.0pt);
\draw[color=uuuuuu] (-0.01,0.49) node (e1) {$e_1$};
\draw [fill=uuuuuu] (-1.28,0.) circle (1.0pt);
\draw[color=uuuuuu] (-1.28,0.) node (e123) {};
\draw[color=uuuuuu] (-3.5,-0) node (e123label) {$-e_1-e_2-e_3$};
\draw [fill=uuuuuu] (-2.,1.1) circle (1.0pt);
\draw[color=uuuuuu] (-2.4,1.23) node (e2) {$e_2$};
\draw [fill=uuuuuu] (-2.,-1.18) circle (1.0pt);
\draw[color=uuuuuu] (-2.4,-1.13) node (e3) {$e_3$};
\draw [fill=uuuuuu] (1.36,0.) circle (1.0pt);
\draw[color=uuuuuu] (1.36,0.) node (e145) {};
\draw[color=uuuuuu] (3.5,0) node (e145label) {$-e_1-e_4-e_5$};
\draw [fill=uuuuuu] (2.02,1.21) circle (1.0pt);
\draw[color=uuuuuu] (2.41,1.35) node (e4) {$e_4$};
\draw [fill=uuuuuu] (2.02,-1.21) circle (1.0pt);
\draw[color=uuuuuu] (2.37,-1.17) node (e5) {$e_5$};
\draw[->](e145label) edge[bend right] (e145);
\draw[<-](e123) edge[bend right] (e123label);
\end{tikzpicture}
\vspace{-1.5in}
\end{center}
\caption{The two-dimensional tropical variety from
  Example~\ref{ex:notconnected} depicted here is not $2$-connected
  since removing any facet containing $e_1$ disconnects it.  So it is not the tropicalization of an irreducible variety.}
\label{fig:twoK4}
\end{figure}
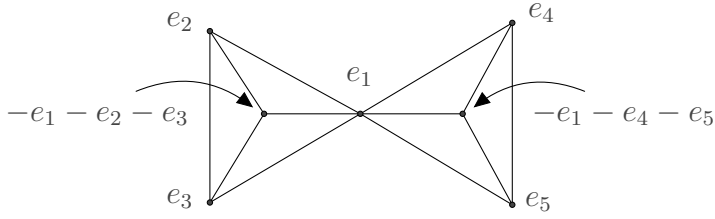

\end{example}

The connectivity depends on the choice of polyhedral structure, and in
particular on the choice of lineality space.

\begin{example} \label{e:lineality}
Let $X$ be the subvariety of $(K^*)^3$ defined by $x_1+x_2+1=0$.  The
tropicalization $\trop(X)$ is the support of the two-dimensional fan
with lineality space $\spann(\mathbf{e}_3)$, and three cones generated
by $\mathbf{e}_1$, $\mathbf{e}_2$, and $-\mathbf{e}_1-\mathbf{e}_2$ in
addition to the lineality space.  With that choice of polyhedral
complex, the facet-ridge hypergraph has three vertices and one
hyperedge containing all vertices, which is connected but not
$2$-connected.  An alternate polyhedral structure is given by
subdividing each cone using the hyperplane $x_3=0$, to obtain a fan with trivial
lineality space and six maximal cones:
$\pos(\mathbf{e}_1,\mathbf{e}_3), \pos(\mathbf{e}_1,-\mathbf{e}_3),
\pos(\mathbf{e}_2,\mathbf{e}_3), \pos(\mathbf{e}_2,-\mathbf{e}_3)$,
$\pos(-\mathbf{e}_1-\mathbf{e}_2,\mathbf{e}_3),$ and
$\pos(-\mathbf{e}_1-\mathbf{e}_2,-\mathbf{e}_3)$.  The facet-ridge
hypergraph now has six vertices, two hyperedges with three vertices
each, and three hyperedges with two vertices each.  This is
$2$-connected.
\end{example}

The fact that tropicalizations of irreducible varieties are connected
through codimension one plays a crucial role in algorithms to compute
tropical varieties, as implemented in \cite{gfan}, following
algorithms originally outlined in \cite{BJSST}.  It would be
interesting to exploit this higher connectivity algorithmically.  As
the space of phylogenetic trees 
can be realized as the
tropicalization of an irreducible variety (the complement of the
type-$A$ hyperplane arrangement) \cite{ArdilaKlivans}, it would be interesting to explore
this connection.  It would also be interesting to understand the
extent to which $d$-connectivity is a combinatorial notion.  In that direction, in
Proposition~\ref{p:matroid} we show that the fine fan structure on the
Bergman fan of a matroid of rank $d+1$ is $d$-connected.  Such Bergman
fans play the role of building blocks for abstract tropical varieties.

The proof of Theorem~\ref{t:maintheorem} involves aspects of both algebraic and polyhedral geometry.  In
Section~\ref{sec:Bertini} we prove the following tropical version of Bertini's
Theorem.
\begin{theorem}[Tropical Bertini Theorem]
\label{thm:Bertini}
Let $X \subseteq (K^*)^n$ be an irreducible $d$-dimensional variety, with $d \geq 2$,
over an algebraically closed valued field $K$ of characteristic $0$ with
$\mathbb Q$ contained in the value group.  The space of rational
affine hyperplanes in $\mathbb R^n$ can be identified with $\mathbb
P^{n}_{\mathbb Q}$.  The set of hyperplanes $H$ for which the
intersection $\trop(X) \cap H$ is the tropicalization of an
irreducible variety 
is dense in the Euclidean topology on $\mathbb P^n_{\mathbb Q}$.
\end{theorem}
 This gives an affirmative answer to Question 9
of~\cite{CartwrightPayne} in characteristic zero, which asks, for an irreducible variety
$X$, whether there always exists an affine hyperplane whose
intersection with $\trop(X)$ is proper and connected through
codimension one.  

In Section~\ref{sec:proof}
we prove the connectivity theorem using the tropical Bertini theorem
and induction on dimension.  Proposition~\ref{p:matroid} on Bergman
fans is proved in Section~\ref{sec:Bergman}, and we state some open
problems in Section~\ref{sec:questions}.

After this paper was posted on the ArXiv, \cite{HY} gave a purely combinatorial proof of
Corollary~\ref{cor:skeleton}, removing the rationality assumption, and \cite{WICA} removed the
characteristic $p$ assumption from the tropical Bertini theorem, and thus,
by Remark~\ref{rem:char}, from Theorem~\ref{t:maintheorem}.

\section{Proof of the  Tropical Bertini Theorem}
\label{sec:Bertini}

Let $K$ be a (possibly trivially) valued field.
The tropicalization of a variety $X \subset (K^*)^n$ is the set
$\trop(X) = 
\overline{\val(X(L))} \subseteq \mathbb R^n$, where $L$ is any
nontrivially valued algebraically closed field extension of $K$, and
the closure is taken in the Euclidean topology.  For example, the
tropicalization of a subtorus of $(K^*)^n$ is a usual linear subspace
of $\RR^n$ defined over $\QQ$.
Taking field extensions does not change the tropicalization. 

The Tropical Bertini Theorem (Theorem~\ref{thm:Bertini}) says that most intersections of
the tropicalization of an irreducible variety and a hyperplane are again the
tropicalization of an irreducible variety.  

\begin{example} \label{e:tropicalbertini}
 Let $X = V(x_1-x_2^2x_3^2) \subseteq \mathbb (K^*)^3$.  The
 intersection of $X$ with the subtorus defined by $x_1^{a_1} x_2^{2a_2}
 x_3^{2 a_3} = 1$ is not irreducible for any tuple of integers
 $(a_1,a_2,a_3) \notin \ZZ(1,-1,-1)$, so it is not the case that the intersection of $X$ with
 a ``generic'' subtorus is irreducible.  However the tropicalization
 of the statement is true.
\end{example}

\begin{remark} 
\label{rem:scale} If a polyhedral complex is the
tropicalization of an irreducible  variety, then so is its image under any invertible linear transformation
over $\QQ$.  To see this, first note that for any nonzero rational
number $r$, the scaling $r \cdot \trop(X)$ is achieved by scaling the
valuation of the coefficient field.  This means that we may scale the
image to assume that the linear transformation is given by an integer
matrix.  Changing basis over $\ZZ$ for the source or target
corresponds to an automorphism of the respective torus, so we only need to
consider diagonal linear transformations.   Finally, note that if $X \subset (K^*)^n$ is
irreducible, so is its image under the map $(x_1,\dots,x_n) \mapsto
(x_1^{c_1},x_2^{c_2}, \dots, x_n^{c_n})$ for any integers
$c_1,\dots,c_n$. 
\end{remark}

The key idea for the proof of Theorem~\ref{thm:Bertini} is the
following subtorus analogue of Bertini's Theorem due to
\cites{Zannier,FuchsMantovaZannier}.  We also make crucial use of some
modifications due to \cite{AmorosoSombra}.  Following
\cite{AmorosoSombra} we will say that a map $\pi : X \rightarrow
(K^*)^d$ satisfies the {\em property PB} (pullback) if the pullback
$\lambda^*X := X \times_{(K^*)^d} (K^*)^d$ of $X$ along $\pi$ and
$\lambda$ is irreducible for every isogeny $\lambda$ of $(K^*)^d$.  An
isogeny of $(K^*)^d$ is a surjective group homomorphism with finite
kernel, so can be represented by a $d \times d$ rank $d$ matrix with
integer entries.

\begin{theorem}[Toric Bertini Theorem. Fuchs, Mantova, Zannier, Theorem 1.5 of \cite{FuchsMantovaZannier}] \label{t:FMZ}~\\
Let $X$ be an irreducible quasiprojective variety of dimension $d$
over an algebraically closed field of characteristic zero, and let
$\pi : X \rightarrow (K^*)^d$ be a dominant map that is finite onto
its image, satisfying property PB.  Then there is a finite union $\mathcal{E}$ of
proper subtori of $(K^*)^d$ such that, for every subtorus $T \subset
(K^*)^d$ not contained in $\mathcal{E}$ and every point $p \in
(K^*)^d$, the preimage $\pi^{-1}(p \cdot T)$ is an irreducible
subvariety of $X$.
\end{theorem}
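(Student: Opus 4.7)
The plan is to recast irreducibility of $\pi^{-1}(p \cdot T)$ as a Galois-theoretic transitivity statement, then show that transitivity persists for all subtori $T$ outside a finite union of proper ones.

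First I would pass to the Galois closure of $\pi$. Since $\pi \colon X \to \pi(X)$ is finite and dominant, the function field extension $K(X)/K(y_1, \dots, y_d)$ sits inside a finite Galois closure with group $G$, and $K(X)$ corresponds to a subgroup $H \le G$. For any irreducible subvariety $Z \subseteq (K^*)^d$ meeting $\pi(X)$, the preimage $\pi^{-1}(Z)$ is irreducible if and only if the image of the \'etale fundamental group $\pi_1^{\mathrm{et}}(Z)$ in $G$ acts transitively on the coset space $G/H$. Translation by $p \in (K^*)^d$ is an automorphism of $(K^*)^d$ and identifies $\pi_1^{\mathrm{et}}(pT)$ with $\pi_1^{\mathrm{et}}(T)$, so we may assume $p = 1$ and focus on subtori through the identity.

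Second, I would reformulate property PB in these terms. For an isogeny $\lambda$ of $(K^*)^d$, the pullback $\lambda^* X$ is irreducible if and only if the finite-index subgroup $\lambda_*(\pi_1^{\mathrm{et}}((K^*)^d)) \subseteq \pi_1^{\mathrm{et}}((K^*)^d)$ maps to a transitive subgroup of $G$. Since such finite-index subgroups form a cofinal family in $\pi_1^{\mathrm{et}}((K^*)^d) \cong \hat{\ZZ}^d$, PB is precisely the statement that the monodromy representation $\rho \colon \hat{\ZZ}^d \to G$ has transitive image after restriction to every open subgroup --- equivalently, no intermediate isogeny cover of $(K^*)^d$ can be inserted between $\pi$ and the identity.

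Third, I would parameterize subtori through the identity by saturated sublattices $L \subseteq \ZZ^d$ with $\pi_1^{\mathrm{et}}(T_L) \cong \hat L \subseteq \hat{\ZZ}^d$. Reducibility of $\pi^{-1}(T_L)$ then corresponds to those $L$ whose image $\rho(\hat L)$ lies in a proper subgroup $G' \lneq G$ that fails to act transitively on $G/H$. For each such $G'$ --- and $G$ has only finitely many subgroups --- the preimage $\rho^{-1}(G')$ is a closed subgroup of $\hat{\ZZ}^d$ of infinite index (by PB applied to the maximal isogeny compatible with $G'$), and the condition $\hat L \subseteq \rho^{-1}(G')$ forces $L$ into a proper saturated sublattice, cutting out a proper subtorus of $(K^*)^d$. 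Taking the union over the finitely many bad $G'$ yields the required exceptional set $\cE$.

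The main obstacle will be the last step: verifying that $\hat L \subseteq \rho^{-1}(G')$ really confines $L$ to a proper rational sublattice, rather than allowing $\hat L$ to enter $\rho^{-1}(G')$ only along sparse prime-power completions while $L$ itself spans $\QQ^d$. Precluding this pathology is exactly where PB is indispensable, and where the arithmetic input of Fuchs--Mantova--Zannier, refining Zannier's earlier height-theoretic analysis of unlikely intersections with subtori, supplies the essential finiteness.
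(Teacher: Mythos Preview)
The paper does not prove this theorem: it is quoted from \cite{FuchsMantovaZannier} and invoked as a black box in the proof of Proposition~\ref{prop:hyperplane}, so there is no argument in the paper to compare against.

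Your sketch nonetheless has a genuine gap. You take the monodromy representation to be a map $\rho \colon \hat{\ZZ}^d \cong \pi_1^{\mathrm{et}}\bigl((K^*)^d\bigr) \to G$, but the cover $\pi$ is ramified along a hypersurface $B \subset (K^*)^d$, and the actual monodromy lives on $\pi_1^{\mathrm{et}}\bigl((K^*)^d \setminus B\bigr)$, which is much larger. The image in $G$ attached to a coset $pT$ comes from $\pi_1^{\mathrm{et}}(pT \setminus B)$ and depends on how $pT$ meets $B$; this is also why the reduction to $p=1$ is not free, since translation by $p$ does not preserve $B$. In fact, were $\pi$ unramified so that your $\rho$ on $\hat{\ZZ}^d$ existed, the cover would itself be an isogeny, and pulling back along that very isogeny would disconnect it unless $\deg\pi=1$; so PB would trivialize the statement. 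With ramification ignored, your step~3 collapses rather than becoming merely delicate: any homomorphism $\hat{\ZZ}^d \to G$ with $G$ finite factors through $(\ZZ/N\ZZ)^d$, so ``$\rho(v)\in G'$'' is a congruence condition on the primitive direction $v$, satisfied by a positive-density set of primitive vectors and certainly not confined to finitely many proper sublattices. The obstacle you identify at the end is therefore not a subtle arithmetic refinement of an otherwise sound plan, but a symptom of the missing branch-locus contribution; controlling how that contribution varies with $T$ and $p$ is exactly the substance of the Fuchs--Mantova--Zannier argument.
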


We also make use of the following lemma, which is a special case of \cite{PayneCorrection}*{Proposition 4}.

\begin{lemma} \label{l:finite} Let $X$ be a $d$-dimensional subvariety of $(K^*)^n$,
and let $\pi: (K^*)^n \rightarrow (K^*)^d$ be a morphism.  If the linear map
$\trop(\pi) \colon \mathbb R^n \rightarrow \mathbb R^d$ is injective
on every maximal face of $\trop(X)$, then the restriction $\pi|_X \colon X \rightarrow
(K^*)^d$ is a finite morphism.  \end{lemma}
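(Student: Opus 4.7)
The plan is to deduce finiteness of $\pi|_X$ from proving two properties: that $\pi|_X$ has zero-dimensional fibers, and that $\pi|_X$ is a proper morphism. A proper morphism with zero-dimensional fibers to an affine target is finite, so establishing these two properties will complete the proof.

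For the zero-dimensional fibers, I would use the standard containment
\[
\trop\bigl(\pi^{-1}(y) \cap X\bigr) \subseteq \trop(\pi)^{-1}\bigl(\val(y)\bigr) \cap \trop(X),
\]
valid for every $y \in (K^*)^d$ viewed as a valued point. The hypothesis that $\trop(\pi)$ is injective on each $d$-dimensional maximal face $F$ of $\trop(X)$ means that $\trop(\pi)|_F$ is an injective affine $\QQ$-linear map between spaces of the same dimension. Consequently $\trop(\pi)^{-1}(\val(y)) \cap F$ is either empty or a single point. Because $\trop(X)$ has only finitely many maximal faces as a polyhedral complex, the right-hand side above is finite for every $y$. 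Since the dimension of a subvariety of a torus agrees with the dimension of its tropicalization, every fiber of $\pi|_X$ is zero-dimensional.

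For properness, I would invoke Proposition 4 of \cite{PayneCorrection}, which reduces algebraic properness of $\pi|_X$ to topological properness of the induced map $\trop(\pi)|_{\trop(X)} \colon \trop(X) \to \RR^d$ in the Euclidean topology. Under the hypothesis, the restriction of $\trop(\pi)$ to each closed maximal face $F$ is an injective affine map between $d$-dimensional affine spaces; such a map is a homeomorphism onto its closed polyhedral image, and hence pulls compact subsets of $\RR^d$ back to compact subsets of $F$. Because $\trop(X)$ is the union of its finitely many closed maximal faces, $\trop(\pi)|_{\trop(X)}$ is therefore proper as a finite union of topologically proper maps sharing a target. Payne's criterion then yields algebraic properness of $\pi|_X$.

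The main technical ingredient is the appeal to Payne's tropical-to-algebraic transfer of properness; the remainder of the argument consists of routine facts about polyhedral complexes and tropicalizations. If one wanted to avoid citing Payne's result, one could instead compactify $X$ inside a toric variety whose fan has support containing $\trop(X)$, extend $\pi$ to that compactification, and check closedness of the resulting morphism by hand—this would constitute the main additional work required for a fully self-contained treatment.
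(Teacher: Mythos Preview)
Your proof is correct and, like the paper's, rests on Proposition~4 of \cite{PayneCorrection} as the key external input. The paper's argument is more direct: it observes that the lemma is literally a special case of that proposition, by passing to the trivial valuation (so that $\trop(X)_{\mathrm{triv}}$ is the recession fan of $\trop(X)$, on whose maximal cones $\trop(\pi)$ is still injective, hence the preimage of $\mathbf{0}$ is $\{\mathbf{0}\}$), whereupon Payne's result yields finiteness of $\pi|_X$ outright. Your decomposition into ``quasi-finite plus proper'' is valid but does more than necessary: Payne's Proposition~4 already delivers finiteness rather than merely properness, so your separate tropical-fiber argument for zero-dimensional fibers, while correct, can be omitted.
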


\begin{proof}[Proof of Lemma \ref{l:finite}] The tropicalization
  $\trop(X)_{\mathrm{triv}}$ of $X$ with respect to the trivial valuation is the recession fan of $\trop(X)$, so the hypothesis that
$\trop(\pi)$ is injective on every maximal face of $\trop(X)$ 
 implies that the same is true for $\trop(X)_{\mathrm{triv}}$.  When $K$ is given the
trivial valuation, the ``tilted group ring'' $R[M]^\mathbf{0}$ of
\cite{PayneFibers}*{\S 2.4} is just the Laurent polynomial ring
$K[M]$, so for $v = \mathbf{0}$, the scheme $T'_{\phi(v)}$ of
\cite{PayneCorrection}*{Proposition 4} is $(K^*)^d$. The preimage
of $\mathbf{0} \in \mathbb R^d$ under the map
$\trop(X)_{\mathrm{triv}} \rightarrow \mathbb R^d$ is just
$\mathbf{0}$, so the scheme $\mathcal X_{\mathbf{0}}$ of
\cite{PayneCorrection} is just $X$.  Proposition 4 of
\cite{PayneCorrection} then states that the morphism
$\mathcal{X}_{\mathbf{0}} \rightarrow (K^*)^d$ is finite, which proves the lemma.
\end{proof}

\begin{proof}[Proof of Theorem~\ref{thm:Bertini}]
Fix a rational polyhedral complex $\Sigma$ with support $\trop(X)$.
Choose a linear map $P \colon \mathbb{R}^n \rightarrow \mathbb{R}^d$,
given by a $d \times n$ integer matrix, that is injective on every
maximal face of $\trop(X)$.  The kernel of such a map is an
$(n-d)$-dimensional subspace of $\mathbb Q^n$, and the set of such
subspaces is
dense in the analytic topology on the Grassmannian $\Gr(n-d,n)$. 
 We will show that there is a dense open 
set in the space of rational
 affine hyperplanes $\overline{H}$ in $\mathbb R^d$ for which
 $P^{-1}(\overline{H}) \cap \trop(X)$ is the tropicalization of an
 irreducible variety.  
 The set of $H = P^{-1}(\overline{H})$ for all such $P$
 is
dense 
in $ \mathbb
P^{n}_{\mathbb Q}$, proving the Theorem.

Let $\pi : (K^*)^n \rightarrow (K^*)^d$ be the morphism
of tori corresponding to $P$.
 By Remark~\ref{rem:scale} we can change
coordinates to assume that the linear map $P : \RR^n \rightarrow
\RR^d$ is the coordinate projection onto the first $d$ coordinates.
Thus $\pi$ is also a coordinate projection from $(K^*)^n$ onto the
first $d$ coordinates.  By Lemma~\ref{l:finite} the morphism $\pi|_X$ is finite.

In order to apply Theorem~\ref{t:FMZ}, we would like $\pi|_X$ to have
property PB.  
If $\pi|_X$ does not have property PB, then there is an isogeny $\mu: (K^*)^d \rightarrow (K^*)^d$ such that $\mu^* X$ is not irreducible.
  Let $I \subseteq K[x_1^{\pm
    1},\dots,x_n^{\pm 1}]$ be the ideal of $X$, and let $\mu$ be
given by 
\[\mu^* \colon K[z_1^{\pm 1},\dots,z_d^{\pm 1}]
 \rightarrow K[z_1^{\pm 1},\dots,z_d^{\pm 1}] \text{ with }\mu^{*}(z_i) =
 z^{\alpha_i}.\]
 
 Since $\pi$ is assumed to be the projection onto the first $d$
  coordinates, the coordinate ring of the pullback $\mu^* X$ is
  \[K[x_1^{\pm 1},\dots,x_n^{\pm 1},z_1^{\pm 1},\dots,z_d^{\pm 1}]/ (I
  + \langle x_i - z^{\alpha_i} : 1 \leq i \leq d \rangle ).\]
This is isomorphic to \[  K[x_{d+1}^{\pm 1},\dots,x_n^{\pm 1},z_1^{\pm 1},\dots,z_d^{\pm
      1}]/J\]
 where $J$ is obtained from $I$ by substituting
 $z^{\alpha}_i$ for $x_i$ for $i=1,\dots,d$.
Write $\rho \colon (K^*)^n \rightarrow (K^*)^d$ for the map
corresponding to the inclusion $K[z_1^{\pm 1},\dots,z_d^{\pm 1}]
\rightarrow K[x_{d+1}^{\pm 1},\dots,x_n^{\pm 1},z_1^{\pm
    1},\dots,z_d^{\pm 1}]$.

Write $Z$ for the reduced structure on $\mu^*X$, which is described by
the radical of the ideal $J$.  We have the following diagram:

\begin{equation}
\label{eqn:diag1}
\xymatrix{ Z \subseteq (K^*)^{d} \times (K^*)^{n-d} \ar[r] \ar[dr]_{\rho} & \mu^*X \subseteq (K^*)^d \times (K^*)^{n-d} \ar[rrr]^{\mu'
    =(\mu, \mathop{id}_{n-d})} \ar[d] & & & X \subseteq (K^*)^n
  \ar[d]^{\pi} \\ &(K^*)^d \ar[rrr]^{\mu}& & & (K^*)^d \\ }.
\end{equation}

The components of $Z$ have dimension at least $\dim(X)$, by the
Principal Ideal Theorem (see for example
\cite{Eisenbud}*{Theorem 10.2}).  Since the pullback
$\mu' \colon \mu^*X \rightarrow X$ of the finite morphism $\mu$ is
finite, this morphism is also proper, so the image of every
irreducible component is closed.  As $X$ is irreducible, there
must thus be a component $Y$ that maps surjectively to $X$.

The group $\ker(\mu)$ acts on $Z$ via
$(t,x) \mapsto (\zeta \cdot t, x)$ for $\zeta \in \ker(\mu)$.  This
group acts transitively on the fibers of $\mu$, so the orbit of $Y$ is all of $Z$.
We also have that $Y$ is not fixed by
this action, as otherwise it would be the reduced structure on the
only irreducible component of $\mu^*X$, contradicting the assumption
that $\mu^*X$ is reducible.  This means that the map $\mu'|_Y$ has
degree less than $\deg(\mu)$.  

If $\mu$ is chosen so that $\rho|_Y$ has minimal degree,
then $\rho|_Y$ must have property PB.  Otherwise we can repeat the
above argument with $Y$ and $\rho$ in the place of $X$ and $\pi$ to
further reduce the degree since
\[  \deg(\rho|_Y) = \deg(\rho|_Y)\deg(\mu)/\deg(\mu) = \deg(\pi|_X) \deg(\mu'|_Y)/\deg(\mu) <  \deg(\pi|_X).\]

The isogeny $\mu : (K^*)^d \rightarrow (K^*)^d$ has a
natural tropicalization $\trop(\mu) : \RR^d \rightarrow \RR^d$ such
that $\trop \circ \mu = \trop(\mu) \circ \trop$ as maps from $(K^*)^d$
to $\RR^d$.
Then Diagram~\eqref{eqn:diag1} tropicalizes as 
\begin{equation}
\label{eqn:commdiag}
\xymatrix{ \trop(Y) \subseteq \mathbb R^n \ar[rrr]^{\trop(\mu')}
  \ar[d]_{\trop(\rho)} & & & \trop(X) \subseteq \mathbb R^n
  \ar[d]^{\trop(\pi) = P} \\ \mathbb R^d \ar[rrr]^{\trop(\mu)}& & &
  \mathbb R^d \\ }
\end{equation}
where $\trop(\mu')$ maps $\trop(Y)$ to $\trop(X)$.

Since $\trop(\mu)$ and $\trop(\mu')$ are invertible linear maps, 
the assumption that $P$ is injective on every maximal face of $\trop(X)$
implies that $\trop(\rho)$ is also injective on every maximal face of
$\trop(Y)$.
  Therefore $\rho|_Y$ is a finite morphism by Lemma~\ref{l:finite}.
We can now apply Theorem~\ref{t:FMZ} to $\rho|_Y \colon Y \rightarrow (K^*)^d$.  Let $\mathcal E$ be the exceptional set in the
theorem, which is a union of finitely many proper subtori.  Then
$\trop(\mathcal E)$ is the union of finitely many proper linear subspaces of $\mathbb R^d$.  
 The affine rational hyperplanes that are not parallel to any subspace in $\trop(\mathcal E)$
form a dense open subset of $\mathbb{P}^d_\QQ$.
Let $\overline{H}$ be one such generic hyperplane.
Let $T \subseteq (K^*)^d$ be the torus such that $\trop(T)$ is a
 hyperplane parallel to $\overline{H}$.  
 For any $s \in (K^*)^d$ with $\val(s) \in \overline{H}$, we have $\trop(s \cdot
T) = \overline{H}$.  By  Theorem~\ref{t:FMZ},
$\rho|_Y^{-1}(s \cdot T) \subset Y$ is irreducible.  

Let $H$ be the
hyperplane in $\RR^n$ defined by $H := (\trop
\rho)^{-1}(\overline{H})$.  Since $\rho$ is a monomial map from
$(K^*)^n \rightarrow (K^*)^d$, we have $H = \trop(\rho^{-1}(s \cdot T))$.  
The intersection $H \cap \trop(Y)$ is transverse because $\trop(\rho)$ is injective on every maximal face of $\trop(Y)$.
Thus the Transverse Intersection
Lemma~\cite{OssermanPayne}*{Theorem~1.1}, \cite{BJSST}*{Lemma 15},
\cite{TropicalBook}*{Theorem 3.4.12} implies that 
\[H \cap \trop(Y) = \trop(\rho^{-1}(s \cdot T) \cap Y) = \trop(\rho|_Y^{-1}(s \cdot T)) .\]
Thus $H \cap \trop(Y)$ is the tropicalization of the irreducible
variety $\rho|_Y^{-1}(s \cdot T)$.  It follows from Remark~\ref{rem:scale} that the same is true
for $\trop(\mu')(H) \cap \Sigma =
\trop(\pi)^{-1}(\trop(\mu)(\overline{H}))$.  As the set of
$\trop(\mu)(\overline{H})$ where $\overline{H}$ is not parallel to
any subspace in $\trop(\mathcal E)$ is dense and open in $\mathbb P^d_{\mathbb Q}$,
the result follows.
\end{proof}

\begin{example}
We illustrate the key constructions of the proof on the variety from Example~\ref{e:tropicalbertini}: $X = V(x_1-x_2^2x_3^2) \subseteq \mathbb (K^*)^3$.
The tropicalization is $\trop(X) = \{\mathbf{w} \in \mathbb R^3 : w_1 =2w_2+2w_3 \}$.  The projection of
$\trop(X)$ to the first two coordinates is a bijection, so the tropicalization
of the projection $\pi \colon \mathbb (K^*)^3
\rightarrow \mathbb (K^*)^2, (x_1,x_2,x_3) \mapsto (x_1, x_2)$ satisfies the condition of the first
paragraph of the proof.  

Consider the isogeny $\mu : (K^*)^2
\rightarrow (K^*)^2$ given by $\mu(z_1,z_2) = (z_1^4,z_2)$.  The coordinate ring of the  pullback $\mu^* X$ is
\[K[x_1^{\pm 1},x_2^{\pm
    1},x_3^{\pm 1},z_1^{\pm 1},z_2^{\pm 2}]/\langle x_1-x_2^2x_3^2,
x_1-z_1^4,x_2-z_2 \rangle \cong K[x_3^{\pm 1},z_1^{\pm 1},z_2^{\pm
    1}]/\langle z_1^4-z_2^2x_3^2 \rangle,\]
so the ideal $J$ of the
proof is $\langle z_1^4-z_2^2x_3^2 \rangle \subseteq K[z_1^{\pm
    1},z_2^{\pm 1},x_3^{\pm 1}]$.  Note that $V(J)$ is not irreducible, so $\pi$ does not have property PB.
It is, however, radical, so defines the subscheme $Z$.  The map $\mu' \colon Z \rightarrow X$ given by 
$\mu' \colon (z_1,z_2,x_3) \mapsto (z_1^4,z_2,x_3)$ has degree $4$.

One component of $Z$ is given by $Y = V(z_1^2-z_2x_3)$.  This maps
surjectively to $X$ via the map $\mu'$, and $\mu'|_Y$ has degree $2$.
The kernel of $\mu$ is the multiplicative abelian
group $\{(1,1),(-1,1),(i,1),(-i,1)\}$.

We now check that the projection of $Y$ onto the first two coordinates
has property PB.  For every isogeny $\lambda$ on $(K^*)^2$, given by
$\lambda(t_1, t_2)= (t_1^{a_1} t_2^{a_2}, t_1^{b_1}t_2^{b_2})$, the
coordinate ring of the pullback $\lambda^* Y$ is
\[
K[z_1^{\pm 1}, z_2^{\pm 1}, x_3^{\pm 1}, t_1^{\pm 1}, t_2^{\pm 1}] /
\langle z_1^2 - z_2 x_3, z_1 - t_1^{a_1} t_2^{a_2}, z_2 - t_1^{b_1}t_2^{b_2}
\rangle \cong K[x_3^{\pm 1}, t_1^{\pm 1}, t_2^{\pm 1}] /
\langle  t_1^{2a_1} t_2^{2a_2} -  t_1^{b_1}t_2^{b_2} x_3 \rangle
\] 
The binomial $t_1^{2a_1} t_2^{2a_2} -  t_1^{b_1}t_2^{b_2} x_3$ is
irreducible in the Laurent polynomial ring, so the pullback is irreducible.
\end{example}

\begin{remark}
\label{rem:char}
The results in the paper \cite{FuchsMantovaZannier} are stated over
$\mathbb C$.  The proofs there rely on resolution of
singularities and on some facts from Galois theory that are simpler
in characteristic zero, but all go through in the case that the field
is algebraically closed of characteristic zero.  
The Tropical Bertini Theorem (Theorem~\ref{thm:Bertini}) is the only
place where the characteristic zero assumption is used in the proof of
the higher connectivity result in Theorem~\ref{t:maintheorem}.
\end{remark}

\section{Proof of the Connectivity Theorem}
\label{sec:proof}
We now prove the main result of this paper: Theorem~\ref{t:maintheorem}.

Let $X$ be a $d$-dimensional irreducible subvariety of $(K^*)^n$,
where $K$ is a field of characteristic zero that is either
algebraically closed, complete, or real closed with convex valuation
ring.  Fix a polyhedral complex $\Sigma$ with support
$|\Sigma|$ equal to $\trop(X)$, such that the normal fan of every
facet of $\Sigma$ is a rational polyhedral fan.

  Suppose $\Sigma$ has an $\ell$-dimensional lineality space $V$.  The
  rationality assumption implies that $V$ is a rational subspace of
  $\mathbb R^n$, so it determines an $\ell$-dimensional subtorus $T'$
  of $(K^*)^n$, with $\trop(T')=V$.  We claim that $T'$ acts on $X$.
  If $T'$ did not act on $X$, then the orbit of $X$ under $T'$ would
  have dimension greater than $d$, and its tropicalization would also
  have dimension greater than $d$.  On the other hand, the
  tropicalization of this orbit is $\Sigma + V = \Sigma$, which has
  dimension $d$.  Therefore $T'$ must act on $X$.  Since the action of
  $T'$ on $(K^*)^n$ is free, the restriction to $X$ is also free.  Let
  $\widetilde{X} = X/T' \subseteq (K^*)^n/T' \cong (K^*)^{n-\ell}$ be
  the quotient.  Then $\widetilde{X}$ is irreducible of dimension
  $d-\ell$, $\trop(\widetilde{X})= \Sigma/V$ is the quotient of
  $\trop(X)$ by its lineality space, and $\Sigma$ and $\Sigma/V$ have
  the same connectivity.  Therefore we may reduce to the case when the
  lineality space is trivial and $\ell=0$.  Since $\trop(X)$ is
  connected, the triviality of the lineality space implies that every
  face of $\trop(X)$ is pointed, meaning that it does not contain an
  affine line.

By \cite{CartwrightPayne}*{Proposition 4} we may replace the field $K$
by its algebraic closure, and $X$ by any irreducible component of the
extension $X_{\overline{K}}$ without changing the tropicalization.  We
henceforth assume that $K$ is algebraically closed, and $X$ is
geometrically irreducible.  We may thus pass to a further field
extension if necessary to assume that the valuation on $K$ is
nontrivial, and $\mathbb Q$ is contained in the value group.

We may also assume that $\trop(X)$ does not live in any proper
subspace in $\mathbb R^n$.  Otherwise $X$ lives in a translate of a
subtorus of $(K^*)^n$ by~\cite{JensenKahleKatthaen}*{Corollary~14},
and the result for the translate of $X$ in the subtorus implies the
result for $X$.

We will first prove the result when the tropical variety is all of
$\RR^d$, with an arbitrary polyhedral complex structure but trivial
lineality space.  It suffices to show that for any $d \geq 1$ removing $d-1$
closed pointed convex sets $C_1,\dots,C_{d-1}$ from $\RR^d$ does not
disconnect $\RR^d$.  Indeed, in our context this would imply that
there was a path between any two points in the interior of  facets.
If this path passed through a codimension two cell, none of the
neighborhood of this cell would have been removed, so we can deform the path to only pass through facets and codimension-one cells.

To show connectedness, identify $\RR^d$ with the hyperplane defined by $x_1 = 1$ in
$\RR^{d+1}$.  Let $S^d$ be a sphere around the origin in $\RR^{d+1}$.
Let $\widetilde{C_i}$ be the closure of $\mathrm{cone}(\{1\} \times
C_i) \cap S^d$, and let $H = \{x \in S^d \mid x_1 \leq 0\}$ be the
lower hemisphere.  Then $S^d \setminus (H \cup \widetilde{C_1} \cup
\cdots \cup \widetilde{C_{d-1}})$ is homeomorphic to $\RR^d \setminus
(C_1\cup \cdots \cup C_{d-1})$.

By Alexander duality, $S^d \setminus (H \cup \widetilde{C_1} \cup
\cdots \cup \widetilde{C_{d-1}})$ is connected if and only if $H \cup
\widetilde{C_1} \cup \cdots \cup \widetilde{C_{d-1}}$ does not have
homology in dimension $d-1$.  Since the $C_i$s are pointed and convex,
the intersection of any collection of $\widetilde{C_i}$s is
contractible.  The intersection of any of them with the lower
hemisphere is also contractible.  By the Nerve Theorem for regular
cell complexes~\cite{Bjorner}*{Theorem~10.6} the union $H \cup
\widetilde{C_1} \cup \cdots \cup \widetilde{C_{d-1}}$ has the same
homology as its nerve complex.  But the nerve complex is a simplicial
complex on $d$ vertices, so it cannot have homology in dimension
$d-1$.  This completes the proof when the tropical variety is all of
$\RR^n$.

We assume from now on that $|\Sigma| \neq \RR^n$. 
We proceed by
 induction on the dimension $d$ of~$\Sigma$.  
 The base case $d=1$ is the statement that the tropicalization of an
 irreducible curve is connected (see \cite{CartwrightPayne} or
 \cite{TropicalBook}*{Proposition~6.6.22}); this result does not depend
 on the choice of polyhedral complex structure on $\trop(X)$.  The
 hypothesis on the field $K$ is used here.

Now let $d \geq 2$ and assume that the assertion in the theorem is
true for smaller dimensions. Let $\mathcal G$ be any collection of
$d-1$ facets of $\Sigma$.  For any two facets $P, Q$ of $\Sigma
\setminus \cG$ we need to show that after removing the closed facets
in $\mathcal G$, there is still a path from $P$ and $Q$ through ridges
and facets only.

Let $F$ be any facet in $\cG$.  We define an equivalence relation $\sim_F$ on the
facets of $\Sigma \setminus F$ as the transitive closure of the
following relation: $P \sim_F Q$ if there exists a hyperplane $H$ with
$H \cap F = \varnothing$, which meets both $P$ and $Q$ in their
relative interior. 

The theorem then follows from the following two claims.

\begin{claim}\label{lem:path}
For facets $P,Q$ of $\Sigma$ which are not in $\cG$, 
if $P \sim_F  Q$, then there is a facet-ridge path between $P$ and $Q$ that avoids $\cG$.
  \end{claim}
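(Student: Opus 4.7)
The plan is to apply the tropical Bertini theorem (Proposition~\ref{prop:hyperplane}) to perturb the hyperplane $H$ witnessing $P \sim_F Q$ to a rational hyperplane $H'$ whose slice of $\trop(X)$ is the tropicalization of an irreducible $(d-1)$-dimensional variety, and then to invoke the main theorem inductively in dimension $d-1$ to find the desired path inside this slice. Since $\sim_F$ is defined as a transitive closure, by concatenating paths it suffices to treat the base case where a single hyperplane $H$ witnesses $P$ and $Q$ being related. The conditions $H \cap F = \varnothing$ and that $H$ meets $P$ and $Q$ in their relative interiors are Euclidean-open on the space $\mathbb{P}^n_\QQ$ of rational affine hyperplanes, so by Proposition~\ref{prop:hyperplane} I may perturb $H$ to a nearby rational hyperplane $H'$ still satisfying these conditions, and additionally such that $\Sigma' := H' \cap \Sigma$ is the tropicalization of an irreducible $(d-1)$-dimensional variety. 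The slice $\Sigma'$ inherits a pure $(d-1)$-dimensional polyhedral structure, whose facets are $H' \cap F'$ for facets $F'$ of $\Sigma$ met by $H'$ in relative interior, and whose ridges are $H' \cap R$ for ridges $R$ of $\Sigma$ met similarly; here $P \cap H'$ and $Q \cap H'$ are facets of $\Sigma'$, while $F$ contributes nothing.

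Applying the inductive hypothesis of Theorem~\ref{t:maintheorem} to $\Sigma'$ yields that $\Sigma'$ is $(d-1-\ell')$-connected through codimension one, where $\ell'$ is the dimension of its lineality space. Generic choice of $H'$ forces $\ell'=0$: for a finite polyhedral complex the lineality equals the intersection of the lineality spaces of the facets, and $\Sigma$ having trivial lineality (from the earlier reduction in the proof of the main theorem) means this intersection is $\{0\}$; making $H'$ transverse to the finitely many facet-lineality subspaces then forces the lineality of the slice to be trivial as well. Hence $\Sigma'$ is $(d-1)$-connected through codimension one, and removing the at most $|\cG \setminus \{F\}| = d-2$ facets of $\Sigma'$ corresponding to elements of $\cG \setminus \{F\}$ still leaves $\Sigma'$ connected through codimension one. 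A facet-ridge path in $\Sigma'$ from $P \cap H'$ to $Q \cap H'$ avoiding these removed facets then lifts facet-by-facet and ridge-by-ridge to a facet-ridge path in $\Sigma$ from $P$ to $Q$ avoiding $\cG$ entirely.

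The main obstacle is justifying the triviality of the slice's lineality $\ell'$: individual facets of $\Sigma$ may have nontrivial lineality even when the complex $\Sigma$ itself does not, and if $H'$ misses some facets of $\Sigma$ then the intersection of the lineality spaces of the facets that do appear in the slice could a priori be larger than the intersection taken over all of $\Sigma$'s facets. Resolving this cleanly will require either a refined genericity argument ensuring that $H'$ meets all the facets whose linealities jointly reduce to $\{0\}$, or a direct appeal to the recession-fan structure of tropical varieties to see that after the reduction to trivial global lineality each facet of $\Sigma$ is already pointed.
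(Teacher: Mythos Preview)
Your approach is essentially the paper's: perturb the witnessing hyperplane using the Tropical Bertini Theorem, intersect, and apply the inductive hypothesis to the slice to find a path avoiding $\cG\setminus\{F\}$, which then lifts to $\Sigma$. Two points deserve tightening. First, your flagged obstacle about the lineality $\ell'$ of the slice dissolves via the reduction already made in the setup of the main theorem: once $\Sigma$ has trivial lineality and is connected, \emph{every} face of $\Sigma$ is pointed (contains no affine line); hence each facet $H'\cap F'$ of $\Sigma'$, being contained in a pointed polyhedron $F'$, is itself pointed, and so $\ell'=0$ automatically---no extra genericity on $H'$ is needed for this, and your second suggested resolution is the right one. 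Second, your assertion that the facets and ridges of $\Sigma'$ are exactly the slices of facets and ridges of $\Sigma$ requires the additional open-dense condition that $H'$ contain no vertex of $\Sigma$; the paper imposes this explicitly as its condition~(c), and you should add it to your list of genericity constraints on $H'$ so that the claimed inclusion of facet--ridge hypergraphs is justified.
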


\begin{claim}\label{lem:oneClass}
The equivalence relation $\sim_F$ has only one equivalence class.\\ \qed
\end{claim}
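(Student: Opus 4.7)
The plan is to realize the $\sim_F$-equivalence classes as pairwise disjoint open subsets of a connected parameter space of hyperplanes, forcing there to be exactly one class. Let $U_F$ denote the set of affine hyperplanes in $\mathbb R^n$ disjoint from the closed convex polyhedron $F$; this is a connected open subset of the $n$-dimensional parameter space of affine hyperplanes. For each facet $P \neq F$, set $U(P) := \{H \in U_F : H \cap \relint(P) \neq \varnothing\}$, an open subset of $U_F$. Each $U(P)$ is nonempty by the separating hyperplane theorem applied to any $p \in \relint(P) \subseteq \mathbb R^n \setminus F$. The decisive observation is that any $H \in U(P) \cap U(Q)$ witnesses $P \sim_F Q$ directly, so if we partition the facets other than $F$ into $\sim_F$-equivalence classes $\{C_\alpha\}$ and set $V_\alpha := \bigcup_{P \in C_\alpha} U(P)$, the $V_\alpha$ are pairwise disjoint nonempty open subsets of $U_F$.

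To show only one $V_\alpha$ exists, let $\mathcal{T} \subseteq U_F$ be the subset of hyperplanes whose direction contains neither the direction of any facet of $\Sigma$ nor the affine span of any ridge. A codimension count shows that each such non-transversality condition cuts out a subset of codimension $d$ in the parameter space of affine hyperplanes. Since the inductive step imposes $d \geq 2$ and $\Sigma$ has only finitely many facets and ridges, the complement $U_F \setminus \mathcal{T}$ has codimension at least $2$ in $U_F$, making $\mathcal{T}$ a connected dense open subset. For $H \in \mathcal{T}$, transversality and non-containment ensure that $\relint(H \cap P) = H \cap \relint(P)$ whenever $H$ meets a facet $P$. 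Moreover, the reduction to $\Sigma$ not lying in any proper subspace implies that the recession fan of $\Sigma$ spans $\mathbb R^n$, from which it follows that the projection of $|\Sigma|$ onto the normal of any hyperplane is unbounded in both directions; by connectedness of $|\Sigma|$, every affine hyperplane meets $|\Sigma|$. Combining these facts, every $H \in \mathcal{T}$ lies in $U(P)$ for some $P \neq F$, giving $\mathcal{T} \subseteq \bigsqcup_\alpha V_\alpha$.

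Connectedness of $\mathcal{T}$ together with its covering by disjoint opens $V_\alpha$ forces $\mathcal{T}$ to lie inside a single $V_{\alpha_0}$. Conversely, every nonempty $V_\alpha$ meets $\mathcal{T}$: given any $H_0 \in U(P)$ for some $P \in C_\alpha$, a small generic perturbation of $H_0$ remains in the open set $U(P)$ while avoiding the codimension-$\geq 2$ non-transverse locus, yielding a point of $U(P) \cap \mathcal{T}$. Hence every nonempty $V_\alpha$ equals $V_{\alpha_0}$, and since each $U(P)$ is nonempty, there is exactly one $\sim_F$-equivalence class. The main technical obstacle is the codimension estimate for the non-transverse locus; it essentially uses both $d \geq 2$ from the inductive step and the reduction to $\Sigma$ being full-span, so that $\mathcal{T}$ is a connected set of hyperplanes each of which meets some facet interior other than $F$.
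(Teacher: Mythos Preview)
Your approach---realising the $\sim_F$-classes as disjoint open pieces of a connected parameter space of hyperplanes---is genuinely different from the paper's, which instead fixes two hyperplanes $H_p,H_q$ through interior points of $P,Q$ and uses that the balanced $(d-1)$-complex $\Sigma\cap H_p$ must hit $H_q$. Your strategy can be made to work, but as written it has a real gap, plus a smaller inaccuracy.

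The central problem is the sentence ``transversality and non-containment ensure that $\relint(H\cap P)=H\cap\relint(P)$ whenever $H$ meets a facet $P$.'' This is false. Take any vertex $v$ of $\Sigma$ and a hyperplane $H$ through $v$ that is transverse to every facet and contains no ridge; such $H$ exist and lie in your $\mathcal T$. If $H\cap\mathrm{aff}(P)$ supports $P$ at the face $\{v\}$, then $H\cap P=\{v\}$, so $\relint(H\cap P)=\{v\}$ while $H\cap\relint(P)=\varnothing$. Thus from ``$H$ meets $|\Sigma|$'' you cannot conclude ``$H$ meets some facet interior'' by your stated reason. Strengthening $\mathcal T$ to avoid all vertices would repair this step, but that is a codimension-$1$ condition in the real dual space and would destroy the connectedness of $\mathcal T$ that your whole argument rests on.

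The conclusion you want---that every $H\in\mathcal T$ lies in some $U(P)$---is in fact true, but proving it requires balancing together with the already-known connectedness of $\Sigma$ through codimension one, neither of which you invoke. Writing $H=\{\ell=c\}$, transversality makes $\ell$ non-constant on every facet; if $c$ lay in no $\mathrm{int}\,\ell(P)$ then each facet would sit in $\{\ell\le c\}$ or $\{\ell\ge c\}$, both types occur since $\ell(|\Sigma|)=\mathbb R$, and a facet--ridge path between the two sides would force some shared ridge $R$ into $\{\ell=c\}=H$, contradicting $H\in\mathcal T$. A smaller point: the sets $U(P)$ are not open in $U_F$ (a hyperplane containing $P$ is a non-interior point of $U(P)$), though $U(P)\cap\mathcal T$ is open in $\mathcal T$, which is all you actually use.
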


\begin{proof}[Proof of Claim~\ref{lem:path}]
Suppose that $P \sim_F Q$.  We first observe that there is a rational hyperplane $H$ such that
\begin{enumerate}
\item[(a)] $H$ does not meet $F$ but meets the relative interiors of
  both $P$ and $Q$,
\item[(b)] $H$ does not contain any vertex of $\Sigma$, and
\item[(c)] $H \cap \Sigma$ is the tropicalization of an irreducible
  variety.
\end{enumerate}

The assumption that $P \sim_F Q$ implies the existence of some
hyperplane $H$ not intersecting $F$ but meeting the relative interiors
of $P$ and $Q$.  Since $\dim(\Sigma)\geq 2$,
$H$ can be perturbed to be rational, and satisfy (a) and (b). Moreover
further small perturbation will preserve (a) and (b). By
Theorem~\ref{thm:Bertini} the hyperplanes satisfying (c) are dense in
$\mathbb P^{n}_{\mathbb{Q}}$.  
Thus there exists a rational hyperplane $H$ satisfying (a),(b), and
(c).

Let $Z$ be an irreducible variety such that $\trop(Z) = H \cap \Sigma$.
By induction on dimension, $\trop(Z)$ is $(d-1)$-connected through
codimension one.  The condition (b) above implies that every facet of $\trop(Z)$ is the intersection of a facet of $\Sigma$ with $H$, and every ridge of $\trop(Z)$ is the intersection of a ridge of $\Sigma$ with $H$.  This gives a natural inclusion of the facet-ridge hypergraph of $\trop(Z)$ into the facet-ridge hypergraph $G$ of $\Sigma$.
By construction the resulting sub-hypergraph $G'$ of $G$ contains the
vertices corresponding to $P$ and $Q$, but not the vertex
corresponding to $F$ or any of its incident hyperedges.  While $G'$ may
contain some of the vertices corresponding to $\mathcal G \setminus \{F\}$, since $G'$ is $(d-1)$-connected, $G'$ with  $\mathcal G \setminus \{F\}$
 and their incident hyperedges removed is still connected, so
there is a path between the vertices corresponding to $P$ and
$Q$.  There is thus a path between $P$ and $Q$ in $G$ not passing
through the vertices corresponding to any element of $\mathcal G$.
\end{proof}

\begin{proof}[Proof of Claim~\ref{lem:oneClass}]

Let $P^\circ$ denote the relative interior of $P$.
For any two facets $P, Q $ of $\Sigma
\setminus \mathcal G$, we claim that if there is a line $L$ which intersects
$P^\circ$ and $Q^\circ$ but does not intersect $F$, then $P \sim_F Q$. To see this, take a 
linear projection onto $\RR^{n-1}$ such that the image of the line $L$
is a point $\overline{L}$ and the image of $F$ is a convex polyhedron
$\overline{F}$.  Since $L$ does not meet $F$, we have $\overline{L} \notin
\overline{F}$.  Let $\overline{V}$ be a hyperplane through $\overline{L}$ which
does not meet $\overline{F}$.  The preimage of $\overline{V}$ in
$\RR^n$ is the desired hyperplane through $P^\circ$ and
$Q^\circ$ which does not meet $F$.  Thus $P \sim_F Q$.

Recall that we are assuming that $\Sigma$ is not full dimensional.
 Fix $F \in \mathcal G$.   Let $H$ be an affine hyperplane containing $F$.
Let $P,Q$ be two facets of $\Sigma \setminus \mathcal G$. 
If there are points $p \in P^\circ \setminus H$ and $q \in Q^\circ \cap H$,
then the line through $p$ and $q$ meets $H$ only at $q$, so it does
not intersect $F$, so  $P
\sim_F Q$ as shown above.  In fact, we can weaken the condition: if
$p \in P \setminus H$ and $q \in (Q \cap H)
\setminus \cG$, then we perturb the line through $p$ and
$q$ to get a line through $P^\circ$ and $Q^\circ$ which does not
intersect $F$, because $F$ is closed, so we still have $P \sim_F Q$.

If  there are facets $P,Q$ in
$\Sigma$ such that $(P \setminus H) \setminus \cG \neq \emptyset$ and $(Q \cap H) \setminus
\cG \neq \emptyset$, then every facet of $\Sigma \setminus \cG$ not
contained in $H$ is equivalent to $Q$ under $\sim_F$: every facet of $\Sigma
\setminus \cG$ contained in $H$ is equivalent to $P$ under $\sim_F$, and
$P \sim_F Q$, so Claim~\ref{lem:oneClass} follows by transitive closure.
In other words, it suffices to show that there is a hyperplane $H
\supset F$ so that $(\Sigma \setminus H) \setminus \cG \neq \emptyset$ and $(\Sigma \cap H) \setminus \cG \neq
\emptyset$.

 We are assuming that $\Sigma$ is not contained in any affine
 subspace.  Let $\sigma$ be a facet of $\Sigma$ which is not contained
 in $H$ for some hyperplane $H$ containing $F$.  Since $\sigma$ is a
 $d$-dimensional pointed polyhedron, it is incident to at least $d$
 distinct ridges $\tau_1,\dots,\tau_d$ of $\Sigma$.  By the balancing
 condition at $\tau_i$ there is at least one other facet $\sigma_i$
 containing $\tau_i$.  If $\tau_i \not\subset H$, then $\sigma_i
 \not\subset H$.  If $\tau_i \subset H$, then we can choose $\sigma_i$
 to be on the other side of $H$ from $\sigma$ by the balancing
 condition.  Thus we have found $d$ distinct facets
 $\sigma_1,\dots,\sigma_d$, none of which is in $H$, so at least one
 of these is not in $\cG$.  Therefore $(\Sigma \setminus H) \setminus
 \cG \neq \emptyset$ for any hyperplane $H$ containing $F$.

If $\Sigma$ has codimension greater than one, then take any point $x
\in \Sigma \setminus \cG$ and let $H$ be a hyperplane containing $x$
and $F$.  Then $x \in (\Sigma \cap H) \setminus \cG \neq \emptyset$.

It thus remains to show this in the case that $\Sigma \subset \RR^n$
is a balanced pointed polyhedral complex of dimension $d = n-1$.  We
will show the following by induction on $r$: if $V$ is an
$r$-dimensional affine subspace of $\RR^n$ such that $\Sigma \cap V
\neq \emptyset$, then $\Sigma \cap V$ is not contained in the union of
$r-1$ facets of $\Sigma$.  The case of $r=1$ is trivial.  Suppose $V$
is an affine subspace of dimension $r \geq 2$ such that $\Sigma \cap V
\neq \emptyset$.  We first claim that $\Sigma \cap V$ is not contained
in a single facet of $\Sigma$.  This is immediate if $V$ contains
$\Sigma$, as a pure positive-dimensional balanced polyhedral complex
with trivial lineality space has more than one facet.  If $\Sigma
\not\subseteq V$, then since $\Sigma \cap V \neq \emptyset$, we have
that $\Sigma \cap V$ contains the stable intersection of $\Sigma$ and
$V$ by \cite{JensenYuStableIntersection}*{Lemma 2.6}.  This stable
intersection is a balanced polyhedral complex of positive dimension by
\cite{JensenYuStableIntersection}*{Theorem 2.13}, so not contained in
a single pointed facet of $\Sigma$.
 If $\Sigma \cap V$ is contained in the union of $r-1$ facets of
 $\Sigma$, then we choose one of these facets $\sigma$, a point $x \in
 \Sigma \cap V \setminus \sigma$, and an affine subspace $V' \subset
 V$ of dimension $r-1$ that contains $x$ but does not intersect
 $\sigma$.  Then $\Sigma \cap V'$ is contained in the union of $r-2$
 facets of $\Sigma$, contradicting the induction hypothesis.  Applying
 this for $r=d$, we see that for any hyperplane $H$, $\Sigma \cap H$
 is not contained in the union of the $d-1$ facets in $\cG$, so
 $(\Sigma \cap H)\setminus \cG \neq \emptyset$.
\end{proof}

\section{Bergman fans}
\label{sec:Bergman}

The trivial valuation tropical variety of a linear space that is given
as the row span of a matrix $A$ depends only on the vector matroid on the columns of $A$.  This tropical variety is called the {\em Bergman fan} of the matroid.
Bergman fans are defined also for matroids that are not
representable over any field.  The Bergman fan of a rank $r$ matroid
is an $r$-dimensional polyhedral fan whose lineality space contains
$\mathbf{1} = (1,1,\dots,1)$.  In~\cite{ArdilaKlivans} Ardila and
Klivans gave two fan structures to a Bergman fan: the {\em coarse subdivision} is
derived from the matroid polytope, and the {\em fine
  subdivision} is a cone over a geometric realization of the order
complex of its lattice of flats.  The rays of the fine
subdivision are the $0/1$ indicator vectors of flats of the matroids
  and the cones correspond to chains of flats ordered by inclusion.
  We will now show that the fine subdivision of the Bergman fan of any
  matroid $M$, representable or not, is $\rk(M)-1$ connected. 

\begin{proposition} \label{p:matroid}
Let $M$ be a matroid of rank $d+1$ on the ground set $\{0,\dots, n\}$,
and let $\Sigma$ be the fine subdivision on the Bergman fan $\trop(M)$
in $\mathbb R^n \cong \mathbb R^{n+1}/\mathbb R
\mathbf{1}$. The facet-ridge hypergraph of $\Sigma$ is $d$-connected.
\end{proposition}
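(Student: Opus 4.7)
My plan is by induction on $d = \rk(M) - 1$, following the two-claim structure of the proof of Theorem~\ref{t:maintheorem} and supplying combinatorial substitutes for the algebraic ingredients.

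For the base case $d = 1$, the matroid has rank $2$ and $\Sigma$ consists of rays emanating from the origin, one per rank-$1$ flat. The origin is the unique ridge and is contained in every facet, so the facet-ridge hypergraph has a single hyperedge covering every vertex and is trivially $1$-connected.

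For the inductive step with $d \geq 2$, let $\mathcal G$ be a set of $d-1$ facets and let $P, Q \notin \mathcal G$; the goal is to produce a facet-ridge path from $P$ to $Q$ in $\Sigma \setminus \mathcal G$. Facets of $\Sigma$ correspond to maximal chains of proper flats $F_1 \subsetneq \cdots \subsetneq F_d$, ridges correspond to chains of length $d-1$, and two facets share a ridge iff their chains differ in exactly one flat, necessarily at the same rank. For each proper flat $L$, the star of the ray $\mathbf{e}_L$ in $\Sigma$, coned off by $\mathbf{e}_L$, has the combinatorial structure of the simplicial join of the fine subdivisions of the Bergman fans of the restriction $M|L$ and the contraction $M/L$, of ranks $\rk L$ and $d+1-\rk L$ respectively. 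By the inductive hypothesis applied to each factor, together with the fact that the facet-ridge hypergraph of a join has connectivity at least the sum of the connectivities of the factors (an analog of Sabidussi's theorem on Cartesian products, since the dual graph of the join is the Cartesian product of the dual graphs), the star of $\mathbf{e}_L$ is at least $(d-1)$-connected.

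With this in hand, I would mimic the two claims from the proof of Theorem~\ref{t:maintheorem}. Fix a facet $F \in \mathcal G$ and declare $P' \sim_F Q'$ whenever both lie in the star of some ray $\mathbf{e}_L$ with $L$ not appearing as a flat in the chain $F$. The analog of Claim~\ref{lem:path}---that $\sim_F$-equivalence produces a facet-ridge path in $\Sigma \setminus \mathcal G$---then follows from the $(d-1)$-connectivity of the star of $\mathbf{e}_L$: the star automatically avoids $F$ since $L \notin F$, and removing the at most $d-2$ remaining facets of $\mathcal G \setminus \{F\}$ from a $(d-1)$-connected hypergraph leaves it connected. The analog of Claim~\ref{lem:oneClass}---uniqueness of the equivalence class---is a combinatorial statement in the geometric lattice of flats: any two facets are linked by a sequence of single-flat exchanges, and at each exchange there is flexibility to choose the intermediate flat outside the chain $F$ (which contains only $d$ proper flats), using that each rank-$2$ interval in the lattice contains at least two atoms.

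The main obstacle I expect is rigorously verifying the join-connectivity estimate in the precise hypergraph form required, and correctly handling matroids that are disconnected, where $\Sigma$ carries a lineality space strictly larger than $\mathbb{R}\mathbf{1}$. The disconnected case can be reduced to the connected case by observing that the fine subdivision of the Bergman fan of a direct sum $M_1 \oplus M_2$ is the join of the fine subdivisions of the summands, whose facet-ridge hypergraph connectivity is additive; inducting on the number of connected components then yields the bound $d = (\rk M_1 - 1) + (\rk M_2 - 1) + \cdots$.
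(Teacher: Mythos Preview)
Your approach diverges substantially from the paper's, and while the overall strategy is plausible, there are real gaps.

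\textbf{Comparison with the paper.}  The paper does \emph{not} use the two-claim architecture here.  Instead it observes that each of the $d-1$ chains in $\mathcal G$ contains exactly one rank-$1$ flat, while $M$ has at least $d+1$ parallel classes; hence there is an element $i$ whose closure $F_i$ is the rank-$1$ flat of \emph{no} cone in $\mathcal G$.  It then invokes Rinc\'on's theorem that the cones whose initial matroid admits a fixed basis $B$ form a subset homeomorphic to $\mathbb R^d$ (hence $d$-connected) to move each $\sigma_j$ to a facet $\sigma_j'$ containing $F_i$, and finishes inside the star of $F_i$, which is exactly the fine Bergman fan of the contraction $M/i$ and contains \emph{no} facet of $\mathcal G$.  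Only one contraction and no join or product lemma is needed.  Your route trades this single clever pigeonhole for a more elaborate machinery (stars of arbitrary flats, a hypergraph Sabidussi bound, and a lattice-theoretic connectivity argument).

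\textbf{Concrete problems in your outline.}
\begin{itemize}
\item Your treatment of disconnected matroids is incorrect.  The fine subdivision of $\trop(M_1\oplus M_2)$ is \emph{not} the join of the fine subdivisions of the summands: a maximal chain of flats in $M_1\oplus M_2$ records not just a pair of maximal chains in the factors but also a shuffle of their rank increments, so there are $\binom{\rk M_1+\rk M_2}{\rk M_1}$ times as many facets as a join would have.  Even granting additivity, your count gives connectivity $\sum(\rk M_i-1)=\rk M - c = d+1-c$, which is strictly less than $d$ once $c\ge 2$.  (In fact your main star argument never uses connectedness of $M$, so this paragraph is both wrong and unnecessary.)
\item Your analogue of Claim~\ref{lem:oneClass} is only a hint.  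Two facets that differ by a single exchange share $d-1$ flats, and all of these \emph{can} lie in the fixed chain $F$, so ``flexibility at each exchange'' is not automatic.  What one actually needs is that the comparability graph on the proper flats \emph{not} in $F$ is connected; this is true (go down to atoms using that every rank-$2$ interval has at least two coatoms, then link atoms $\neq a_F$ through a rank-$2$ flat other than $G_2$), but it is a separate lemma, not a one-liner.
\item Even after fixing Claim~2, the concatenation step hides the same issue as in the proof of Theorem~\ref{t:maintheorem}: the intermediate facets $R_i$ in a $\sim_F$-chain may lie in $\mathcal G\setminus\{F\}$, so you must also argue that they can be chosen outside $\mathcal G$.
\end{itemize}

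In short: the skeleton is reasonable, but the paper's argument via a well-chosen rank-$1$ flat and Rinc\'on's local structure theorem is both shorter and avoids all three of these difficulties.
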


\begin{proof}
The Bergman fan of $M$ with the fine subdivision is a $d$-dimensional pointed
fan in $\mathbb R^n \cong \mathbb R^{n+1}/\mathbb R \mathbf{1}$.  The proof is by
induction on the rank, $d+1$, of the matroid.  When $d=1$, $\Sigma$ is a
one-dimensional fan, with all facets (rays) meeting at the origin, so the fan is connected.  We now assume $d>1$.

Fix a collection $\mathcal G$ of $d-1$ facets of $\Sigma$.  The
facets in $\mathcal G$ are labeled by maximal chains of flats in $M$,
so every facet contains exactly one rank-one flat.
The rank-one flat in each chain is a parallel class of elements
of $M$.  There are at least $d+1$ parallel classes because $M$ has
rank $d+1$.
Since every rank-one flat is in a facet, there are at least $d+1$ facets, so we can fix two
other facets $\sigma_1, \sigma_2 \not \in \mathcal G$.
Since $|\mathcal G|=d-1$, there is an element $i \in
\{0,\dots,n\}$ which is not in any of the rank-one flats of cones in $\mathcal G$.
We write $M_1$ and $M_2$ for the initial matroids corresponding to
vectors in the relative interior of $\sigma_1$ and $\sigma_2$
respectively.  As these cones are in $\trop(M)$, both these matroids
are loop-free, so there are bases $B_1, B_2$ containing~$i$ in
$M_1,M_2$ respectively.

An ordering of a basis $B = (b_1,b_2,\dots,b_{d+1})$ determines a
chain of flats \[
\cl(b_1) \subsetneq \cl(b_1,b_2) \subsetneq \cdots
\subsetneq  \cl(b_1,b_2, \dots, b_{d+1}) = \{0,1,\dots,n\}\]
 in $M$, where the closure $\cl(S)$ denotes the minimal flat
 containing $S$.  Therefore for every basis $B$ of $M$ 
there are at least $d!$ cones of $\Sigma$ with $i$ in their rank-one
flat, and with $B$ as a basis of the corresponding initial matroid.

Choose facets $\sigma'_1, \sigma'_2$ of $\Sigma$ such that $B_1$ and
$B_2$ respectively are bases of the corresponding initial matroid and
$i$ is in their rank-one flat. Thus $\sigma'_1, \sigma'_2$ are not in $\mathcal G$,
 
By \cite{RinconLocalTropical}*{Theorem 2.6} the collection of cones in
$\Sigma$ for which $B$ is a basis of the corresponding initial matroid is homeomorphic to $\mathbb R^d$, and thus is
$d$-connected.  This means that there is a facet-ridge path between
$\sigma_1$ and $\sigma'_1$, and between $\sigma_2$ and $\sigma'_2$,
after the closed facets in $\mathcal G$ are removed. 

 It remains to show that $\sigma'_1$ and $\sigma'_2$ are connected.
 By construction $\sigma'_1$ and $\sigma'_2$ have the same rank-one flat
 $F_i$ containing $i$.  The flats containing $F_i$ (or $i$) are in
 natural inclusion-preserving  bijection with the flats of the contraction $M/i$.  Therefore the star of $\Sigma$ at the ray corresponding to $F_i$ is the Bergman fan of the contraction $M/i$.  Since $M/i$ has rank $d$, by induction its Bergman fan with
the fine subdivision is $(d-1)$-connected, so the star of $F_i$ in
$\Sigma$ is $(d-1)$-connected.

 By assumption, no element of $\mathcal G$ has rank-one flat $F_i$, so there are no elements of $\mathcal G$ in this star.
Thus there is a ridge path in the star of $F_i$
 between $\sigma'_1$ and $\sigma'_2$ avoiding $\mathcal G$, which 
 completes the proof.
\end{proof}

\section{Open Problems}
\label{sec:questions}

\begin{description}
\item[Bergman fans and tropical linear spaces] Is any (nonrealizable) Bergman fan $r-\ell$ connected through codimension one with the coarse fan structure? Any fan structure?  Here $r$ and $\ell$ denote the
  dimension of the Bergman fan and the dimension of the lineality
  space respectively.  Similarly for (nonrealizable) tropical linear spaces of
  valuated matroids as defined in~\cite{Speyer}. 
\item[Connectivity, irreducibility, and algebraic matroids]
An irreducible variety $X$ in $K^n$ gives a matroid on ground set $\{1,2,\dots,n\}$ where the rank of a subset $S \subset \{1,2,\dots,n\}$  is the dimension of the projection of $X$ onto the
$S$-coordinates.  This is called the algebraic matroid of $X$, and
tropicalization preserves algebraic matroids~\cite{YuAlgMatroids}.  In
general it is not known which balanced polyhedral complexes give
matroids this way.
Does the higher connectivity imply that a balanced polyhedral complex
gives an algebraic matroid?  If the balanced complex is  {\em
  tropically irreducible}, in the sense that it is not a union of two
proper balanced weighted rational polyhedral complexes, does the
complex have a higher connectivity or give an algebraic matroid?

\end{description}

\subsection*{\bf Acknowledgements}  This project was revived when the
authors were visiting the Mittag Leffler Institute during the program
on {\em Tropical geometry, Amoebas and Polytopes} in Spring 2018 and
the Institute for Computational and Experimental Research in
Mathematics during the program on {\em Nonlinear Algebra} in Fall 2018, supported by the National Science Foundation under Grant
No. DMS-1439786. They are grateful to the program organizers, and to
the institutes, for the excellent working conditions.  They also thank
Daniel Hathcock, Anders Jensen, Megan Owens, and Raman Sanyal for discussions, and Martin Sombra and
Vincenzo Mantova for answering questions about the toric Bertini
theorems.  JY was partially supported by US National Science
Foundation DMS grants \#1600569 and \#1855726.  DM was partially supported by EPSRC
grant EP/R02300X/1, and also, while at the Mathematical Sciences
Research Institute in Berkeley, California, during April 2019, by NSF
Grant \#1440140.

\begin{bibdiv}

\begin{biblist}

  \bib{AmorosoSombra}{article}{
      author={Amoroso, Francesco},
      author={Sombra, Mart\'{\i}n},
      title={Factorization of bivariate sparse polynomials},
      journal={Acta Arith.},
      volume={191},
      date={2019},
      number={4},
      pages={361--381},
      issn={0065-1036},
      }

\bib{ArdilaKlivans}{article}{
    AUTHOR = {Ardila, Federico},
author ={Klivans, Caroline J.},
     TITLE = {The {B}ergman complex of a matroid and phylogenetic trees},
   JOURNAL = {J. Combin. Theory Ser. B},
    VOLUME = {96},
      YEAR = {2006},
    NUMBER = {1},
     PAGES = {38--49},
      ISSN = {0095-8956},
}

\bib{Ath}{article} {
    AUTHOR = {Athanasiadis, Christos A.},
     TITLE = {On the graph connectivity of skeleta of convex polytopes},
   JOURNAL = {Discrete Comput. Geom.},
    VOLUME = {42},
      YEAR = {2009},
    NUMBER = {2},
     PAGES = {155--165},
      ISSN = {0179-5376},
}

\bib{Balinski}{article}{
   author={Balinski, Michel L.},
   title={On the graph structure of convex polyhedra in $n$-space},
   journal={Pacific J. Math.},
   volume={11},
   date={1961},
   pages={431--434},
   issn={0030-8730},
}

\bib{Bjorner}{incollection}{
    AUTHOR = {Bj\"orner, Anders},
     TITLE = {Topological methods},
 BOOKTITLE = {Handbook of combinatorics, {V}ol.\ 1,\ 2},
     PAGES = {1819--1872},
 PUBLISHER = {Elsevier Sci. B. V., Amsterdam},
      YEAR = {1995},
}

\bib{BJSST}{article}{
label = {BJSST07},
   author={Bogart, Tristram},
   author={Jensen, Anders N.},
   author={Speyer, David},
   author={Sturmfels, Bernd},
   author={Thomas, Rekha R.},
   title={Computing tropical varieties},
   journal={J. Symbolic Comput.},
   volume={42},
   date={2007},
   number={1-2},
   pages={54--73},
   issn={0747-7171},
}

\bib{CartwrightPayne}{article}{
   author={Cartwright, Dustin},
   author={Payne, Sam},
   title={Connectivity of tropicalizations},
   journal={Math. Res. Lett.},
   volume={19},
   date={2012},
   number={5},
   pages={1089--1095},
   issn={1073-2780},
}

\bib{Eisenbud}{book}{
   author={Eisenbud, David},
   title={Commutative algebra},
   series={Graduate Texts in Mathematics},
   volume={150},
   note={With a view toward algebraic geometry},
   publisher={Springer-Verlag, New York},
   date={1995},
   pages={xvi+785},
   isbn={0-387-94268-8},
   isbn={0-387-94269-6},
}

\bib{FuchsMantovaZannier}{article}{
    AUTHOR = {Fuchs, Clemens}, 
author={Mantova, Vincenzo}, 
author={Zannier, Umberto},
     TITLE = {On fewnomials, integral points, and a toric version of
              {B}ertini's theorem},
   JOURNAL = {J. Amer. Math. Soc.},
    VOLUME = {31},
      YEAR = {2018},
    NUMBER = {1},
     PAGES = {107--134},
      ISSN = {0894-0347},
}

    \bib{WICA}{article}{
      AUTHOR={ Gandini, Francesca},
      AUTHOR ={ Hering, Milena},
      AUTHOR = {Maclagan, Diane},
      AUTHOR={ Mohammadi, Fatemeh},
      AUTHOR = {  Rajchgot, Jenna},
      AUTHOR = { Wheeler, Ashley K.},
      AUTHOR = {Yu, Josephine},
  TITLE={Toric and Tropical Bertini Theorems in Prime Characteristic},
  JOURNAL={preprint},
  YEAR={2021}
}

\bib{HY}{article}{
  AUTHOR={Hathcock, Daniel},
  AUTHOR={Yu, Josephine},
  TITLE = {On the hypergraph connectivity of skeleta of polytopes},
  JOURNAL = {preprint, arXiv:2010.05053}
  YEAR = {2020}
  }

\bib{gfan}{misc}{
     author = {Jensen, Anders N.},
     title = {{G}fan, a software system for {G}r{\"o}bner fans and tropical varieties},
     address = {Available at \url{http://home.imf.au.dk/jensen/software/gfan/gfan.html}}
}

\bib{JensenKahleKatthaen}{article}{
  AUTHOR = {Jensen, Anders},
  author={ Kahle, Thomas},
  author={ Katth\"an, Lukas},
     TITLE = {Finding binomials in polynomial ideals},
   JOURNAL = {Res. Math. Sci.},
    VOLUME = {4},
      YEAR = {2017},
     PAGES = {Paper No. 16, 10},
      ISSN = {2522-0144},
}

\bib{JensenYuStableIntersection}{article}{
   author={Jensen, Anders},
   author={Yu, Josephine},
   title={Stable intersections of tropical varieties},
   journal={J. Algebraic Combin.},
   volume={43},
   date={2016},
   number={1},
   pages={101--128},
   issn={0925-9899},
}

\bib{TropicalBook}{book}{
   author={Maclagan, Diane},
   author={Sturmfels, Bernd},
   title={Introduction to tropical geometry},
   series={Graduate Studies in Mathematics},
   volume={161},
   publisher={American Mathematical Society, Providence, RI},
   date={2015},
   pages={xii+363},
   isbn={978-0-8218-5198-2},
}

\bib {OssermanPayne}{article}{
    AUTHOR = {Osserman, Brian},
author={Payne, Sam},
     TITLE = {Lifting tropical intersections},
   JOURNAL = {Doc. Math.},
    VOLUME = {18},
      YEAR = {2013},
     PAGES = {121--175},
      ISSN = {1431-0635},
}

\bib{PayneFibers}{article}{
   author={Payne, Sam},
   title={Fibers of tropicalization},
   journal={Math. Z.},
   volume={262},
   date={2009},
   number={2},
   pages={301--311},
   issn={0025-5874},
}

\bib{PayneCorrection}{article}{
   author={Payne, Sam},
   title={Erratum to: Fibers of tropicalization},
   journal={Math. Z.},
   volume={272},
   date={2012},
   number={3-4},
   pages={1403--1406},
   issn={0025-5874},
}

\bib{RinconLocalTropical}{article}{
   author={Rinc\'{o}n, Felipe},
   title={Local tropical linear spaces},
   journal={Discrete Comput. Geom.},
   volume={50},
   date={2013},
   number={3},
   pages={700--713},
   issn={0179-5376},
}

\bib{Sallee}{article} {
    AUTHOR = {Sallee, George T.},
     TITLE = {Incidence graphs of convex polytopes},
   JOURNAL = {J. Combinatorial Theory},
    VOLUME = {2},
      YEAR = {1967},
     PAGES = {466--506},
      ISSN = {0021-9800},
}

\bib{Speyer}{article}{
    AUTHOR = {Speyer, David E.},
     TITLE = {Tropical linear spaces},
   JOURNAL = {SIAM J. Discrete Math.},
    VOLUME = {22},
      YEAR = {2008},
    NUMBER = {4},
     PAGES = {1527--1558},
      ISSN = {0895-4801},
}

\bib{YuAlgMatroids}{article}{
    AUTHOR = {Yu, Josephine},
     TITLE = {Algebraic matroids and set-theoretic realizability of tropical
              varieties},
   JOURNAL = {J. Combin. Theory Ser. A},
    VOLUME = {147},
      YEAR = {2017},
     PAGES = {41--45},
      ISSN = {0097-3165},
}

\bib{Yu_generic}{article}{
    AUTHOR = {Yu, Josephine},
     TITLE = {Do most polynomials generate a prime ideal?},
   JOURNAL = {J. Algebra},
    VOLUME = {459},
      YEAR = {2016},
     PAGES = {468--474},
      ISSN = {0021-8693},
}

\bib{Zannier}{article}{
 AUTHOR = {Zannier, Umberto},
     TITLE = {Hilbert irreducibility above algebraic groups},
   JOURNAL = {Duke Math. J.},
    VOLUME = {153},
      YEAR = {2010},
    NUMBER = {2},
     PAGES = {397--425},
      ISSN = {0012-7094},
    }

\end{biblist}

\end{bibdiv}

 \end{document}